\newtheorem{theorem}{Theorem}[section]
\newtheorem{corollary}[theorem]{Corollary}
\theoremstyle{definition}
\newtheorem{definition}[theorem]{Definition}
\numberwithin{equation}{section}
\def\cal{\mathcal}
\def\B{\cal B}
\def\r{\mathbb R}
\def\z{\mathbb Z}
\def\p{\varphi}
\begin{document}
\title[Extending a theorem of Datko]{Extending a theorem of Datko for evolutionary family} 
         
\thanks{This research is funded by Vietnam National Foundation for Science and Technology Development (NAFOSTED) under grant number 101.02-2019.01 and the Simons Foundation Grant Targeted for Institute of Mathematics, Vietnam Academy of Science and Technology.}

\subjclass[2010]{34D20, 34D23, 47D06}

\keywords{Datko theorem, exponential stability,  evolutionary family, Banach function spaces.}

\author{Trinh Viet Duoc}
\address{Trinh Viet Duoc,
Faculty of Mathematics, Mechanics, and Informatics\\ University of Science, Vietnam National University\\
334 Nguyen Trai, Hanoi, Vietnam; and 
 Thang Long Institute of Mathematics and Applied Sciences, Nghiem Xuan Yem, Hanoi, Vietnam}
\email{tvduoc@gmail.com, duoctv@vnu.edu.vn}


\date{}          

\begin{abstract}
In this note, we extend a Datko's result in the paper \cite[1972]{Dat}. In particular, the exponential stability of an evolutionary family is characterized by its pointwise trajectories in which the norm mapping of each pointwise trajectory lies in a Banach function space.
\end{abstract} 

\maketitle

\section{Introduction} 
In this note, $X$ is a real or complex Banach space with a norm $\|\cdot\|$ and $I$ is either $\r$ or $\r_+$.
The concept of an evolutionary family arises naturally from the well-posed theory of non-autonomous abstract differential equations on $\r$ or $\r_+$, the reader can refer Engel and Nagel \cite{NaE}, Chicone and Latushkin \cite{ChY}, Pazy \cite{Paz}, Daleckii and Krein \cite{DaK} on this theme.
\begin{definition}
A family of bounded linear operators $(U(t, s))_{t\ge s}$ on a Banach space $X$ is an
 (strongly continuous, exponentially bounded) evolutionary family on $I$ (that means $t,s\in I$) if
\begin{enumerate}[(i)]
\item $U(t,t)=Id$ and $U(t,r)U(r,s)=U(t,s)$ for all $t\ge r\ge s$ and $t,r,s\in I$.
\item The map $(t,s)\mapsto U(t,s)x$ is continuous for every $x\in X$.
\item There are constans $K, c>0$ such that $\|U(t,s)x\| \le Ke^{c(t-s)}\|x\|$ for all $t\ge s$ and $x\in X$.
\end{enumerate}
\end{definition}  
Among the many interesting stable types of evolutionary family, here we interest to exponential stability because it provides good information that exponential decay of  evolutionary family.
\begin{definition}
An evolutionary family $(U(t, s))_{t\ge s}$ on $I$ will be called {\it exponentially stable}  if there exist constants $K_1, \alpha>0$ such that $\|U(t,s)x\|\le K_1 e^{-\alpha(t-s)}\|x\|$ for all $x\in X$ and $t\ge s$ with $t,s\in I$.
\end{definition}
By transitive and exponentially bounded properties, the exponential stability and the uniformly asymptotic stability of an evolutionary family are equivalent together, see \cite[Lemma 1]{Dat}. To study the exponential stability of an evolutionary family, there are two basic approaches. One is based on Perron's method, for instance \cite{MinhRaSch}, and the other is based on Datko's result, e.g. \cite{VanNee}. Because we extend Datko's result so it is recalled here for easy tracking and comparison.

\medskip
\noindent
{\bf Datko Theorem.}
{\it An evolutionary family $(U(t, s))_{t\ge s}$ is exponentially stable  on $\r_+$ if and only if for each $x\in X$ there exists a constant $M(x)>0$ such that  
$$ \int_{t_0}^{\infty} \|U(t,t_0)x\|^2 dt \le M(x) \;\; \text{  for all } \;\; t_0\ge 0.$$}

In case the evolutionary family $(U(t, s))_{t\ge s\ge 0}$ is a strongly continuous semigroup that means $U(t,s)=U(t-s,0)$ for $t\ge s\ge 0$ or $U(t,s)=T(t-s)$ with $(T(t))_{t\ge 0}$ is a strongly continuous semigroup, this result was extended by Pazy in \cite{Paz} for the Lebesgue spaces $L^p(\r_+)$ with $p\in [1,\infty)$  and Neerven in \cite{VanNee}. The Neerven's result covers a class of function spaces very wide. However, it only holds on for strongly continuous semigroups and  Banach spaces over the complex number field. In addition, another limitation in Neerven's result only offers to sufficient condition for exponential stability of strongly continuous semigroups.

Follow the second approach in studying the exponential stability of an evolutionary family, our purpose is generalization of Datko Theorem for evolutionary family on the line or the half-line and a class of function spaces which is wide enough. To accomplish this task, we introduce the concept of Banach function space in Subsection \ref{s1}. Although the class of Banach function spaces in this note is smaller than the class of function spaces in \cite{VanNee} but it is still very large, most of known function spaces belong to class of Banach function spaces. Using the concept of Banach function space, we get the expected results which are stated in Subsection \ref{mr} and are proved in Section \ref{s3}. Our results hold on for both Banach spaces over complex and real field.
 
\section{Banach function spaces and main results}
\subsection{Banach function spaces}\label{s1}
In the monograph book \cite[Chapter 2]{MasSch}, Massera and Sch\"{a}ffer introduced several classes of function spaces that play a fundamental role to study differential equations. Based on the concepts of function spaces were given by Massera and Sch\"{a}ffer, we have collected some basic properties to give the notion of Banach function space. 
  
\begin{definition}\label{fs}
Let $\B$ be the Borel algebra and let $\mu$ be the Lebesgue measure on $\r$. A vector space $E$ of real-valued measurable functions on $\r$  is called  a Banach function space  if 
\begin{enumerate}[i.]
\item $(E,\|\cdot\|_{E})$ is a Banach space and $\|\cdot\|_{E}$ guarantees the property: if $\varphi_2\in E$ and $\varphi_1$ is  real-valued measurable function such that $|\p_1(t)| \le |\varphi_2(t)|$ almost everywhere on $\r$ then $\p_1\in E$ and $\|\p_1\|_{E}\le \|\varphi_2\|_{E}$;
\item the characteristic function $\chi_{[a,b]}$ belongs to $E$ for all $[a,b]\subset \r$ and  $\inf_{t\in \r}\|\chi_{[t,t+1]}\|_{E}>0$;
 \item there is a constant $M\ge 1$ such that
\begin{equation}\label{eb}
\frac{1}{b-a}\int_a^b|\varphi(t)|dt\le 
\frac{M \|\varphi\|_{E}}{\|\chi_{[a,b]}\|_{E}}\, \hbox{ for all }
\varphi\in E \,\hbox{ and } [a,b]\subset \r;
\end{equation}
\item the function $\Lambda_1\varphi$ defined by
  $(\Lambda_1\varphi)(t)= \int_t^{t+1}\varphi(\tau)d\tau$ belongs to $E$ for each $\varphi\in E$;
 \item $E$ is $T_\tau$-invariant and there exists a constant $N>0$ such that $\|T_\tau\| \le N$ for all $\tau \in \r$, where $T_\tau$ 
 is shift operator on $E$ defined by $(T_\tau\varphi)(t)=\varphi(t+\tau) \hbox{ for } t\in \r$.
\end{enumerate}
\end{definition}

Denote by $L_{1, \rm{loc}}(\r)$ space of real-valued locally integrable functions on $\r$. A family of seminorms defining the topology of $L_{1, \rm{loc}}(\r)$ is given by $\{p_n:  n\in \z \text{ and } p_n(\p)=\int_{n}^{n+1}|\p(t)|dt\}$. Then, $L_{1, \rm{loc}}(\r)$ is a Fr\'{e}chet space. Therefore, by \eqref{eb} then $E\hookrightarrow L_{1, \rm{loc}}(\r)$.

By direct inspection, it easily sees that class of Banach function spaces includes the Lebesgue spaces $L^p(\r)$ with $p\in [1,\infty]$, the Lorentz spaces $L^{p,q}(\r)$ with $p, q \in [1,\infty]$, the Orlicz spaces, etc, and the space 
$$ \mathbf{M}(\mathbb{R})=\Big\{\p \in L_{1,\rm{loc}}(\mathbb{R}) : 
\sup_{t\in \r}\int_{t}^{t+1}|\p(\tau)|d\tau <\infty\Big\}$$
with the norm $\|\p\|_{\mathbf{M}}:=\sup_{t\in \r}\int_{t}^{t+1}|\p(\tau)|d\tau$. 
By \eqref{eb} and ii. in Definition \ref{fs},
\begin{equation}\label{ebm}
\|\p\|_{\mathbf{M}} \le \frac{M}{\inf_{t\in \r}\|\chi_{[t,t+1]}\|_{E}}\|\varphi\|_{E}
\end{equation}
for all $\p \in E$. Thus, $E\hookrightarrow  \mathbf{M}(\mathbb{R})$.

Throughout this note function $\chi_{D(\p)}\p$ has the domain $\r$ and defines as follows $(\chi_{D(\p)}\p)(t)=\p(t)$ if $t\in D(\p)$ and $(\chi_{D(\p)}\p)(t)=0$ if otherwise, where $D(\p) \subset \r$ is the domain of the function $\p$.

\subsection{Main results}\label{mr}
Let $(U(t,s))_{t\ge s}$ be an evolutionary family on $I$. For  $t_0\in I$ and $x\in X$, put
$$ g_{t_0,x}(t)=\|U(t,t_0)x\| \;\text{ for }\; t\ge t_0.$$
Because the evolutionary family is strongly continuous so $\chi_{[t_0,\infty)}g_{t_0,x}$ is measurable function.
Through the Banach function space, the exponential stability of the evolutionary family will now be characterized by its pointwise trajectories. The first we put forward necessary condition.
\begin{theorem}\label{t1}
Let $E$ be any Banach function space. If $(U(t,s))_{t\ge s}$ is exponentially stable on $I$ then for each $x\in X$ function $\chi_{[t_0,\infty)}g_{t_0,x}$ belongs to $E$ and there is a constant $M(x)>0$ such that 
$$ \|\chi_{[t_0,\infty)}g_{t_0,x}\|_E \le M(x)$$
for all $t_0\in I$.
\end{theorem}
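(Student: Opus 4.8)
The plan is to dominate every trajectory by a single fixed profile that is only \emph{translated} as $t_0$ varies, and then to invoke the lattice axiom (property i.) and the shift-invariance axiom (property v.) of a Banach function space. Exponential stability gives at once the pointwise estimate $g_{t_0,x}(t)=\|U(t,t_0)x\|\le K_1 e^{-\alpha(t-t_0)}\|x\|$ for $t\ge t_0$. Setting $\psi(t):=e^{-\alpha t}\chi_{[0,\infty)}(t)$ and using the shift operator $(T_\tau\varphi)(t)=\varphi(t+\tau)$, this estimate reads
\[
(\chi_{[t_0,\infty)}g_{t_0,x})(t)\le K_1\|x\|\,e^{-\alpha(t-t_0)}\chi_{[t_0,\infty)}(t)=K_1\|x\|\,(T_{-t_0}\psi)(t).
\]
Thus, \emph{once we know $\psi\in E$}, property v. gives $\|T_{-t_0}\psi\|_E\le N\|\psi\|_E$ uniformly in $t_0$, and property i. applied to the displayed domination yields $\chi_{[t_0,\infty)}g_{t_0,x}\in E$ with $\|\chi_{[t_0,\infty)}g_{t_0,x}\|_E\le K_1 N\|\psi\|_E\,\|x\|=:M(x)$, which is precisely the asserted bound, independent of $t_0$.

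The heart of the proof, and the step I expect to be the main obstacle, is therefore establishing that the fixed exponential profile $\psi$ lies in $E$. Here I would dominate $\psi$ by the step function $s:=\sum_{n=0}^{\infty}e^{-\alpha n}\chi_{[n,n+1]}$, which is legitimate since $\psi(t)=e^{-\alpha t}\le e^{-\alpha n}$ on each $[n,n+1]$. Each summand is a translate, $\chi_{[n,n+1]}=T_{-n}\chi_{[0,1]}$, so property ii. (which furnishes $\chi_{[0,1]}\in E$) together with property v. gives $\|\chi_{[n,n+1]}\|_E\le N\|\chi_{[0,1]}\|_E$ for every $n$. Consequently the partial sums $s_N:=\sum_{n=0}^{N}e^{-\alpha n}\chi_{[n,n+1]}$ satisfy, for $M>N$,
\[
\|s_M-s_N\|_E\le N\|\chi_{[0,1]}\|_E\sum_{n=N+1}^{M}e^{-\alpha n},
\]
which tends to $0$ as $N\to\infty$; hence $(s_N)$ is a Cauchy sequence in the Banach space $E$ and converges there to some limit.

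It remains to identify this $E$-limit with $s$ itself. The embedding $E\hookrightarrow L_{1,\mathrm{loc}}(\r)$ recorded just after \eqref{eb} forces $E$-convergence to imply convergence in $L_{1,\mathrm{loc}}(\r)$; on the other hand $s_N\uparrow s$ gives $s_N\to s$ in $L_{1,\mathrm{loc}}(\r)$ by monotone convergence on each bounded interval. The two limits must then agree almost everywhere, so $s\in E$ with $\|s\|_E\le N\|\chi_{[0,1]}\|_E/(1-e^{-\alpha})$. Finally, property i. applied to $0\le\psi\le s$ gives $\psi\in E$ and $\|\psi\|_E\le\|s\|_E$, closing the one gap in the argument and hence proving the theorem.
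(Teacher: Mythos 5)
Your argument is correct and follows the same overall strategy as the paper's proof: dominate each trajectory pointwise by $K_1\|x\|$ times a translate $T_{-t_0}$ of one fixed exponential profile, reduce the whole theorem to showing that this fixed profile lies in $E$, and obtain the uniform bound in $t_0$ from axiom v. The only genuine difference is how membership of the profile in $E$ is established. The paper works with the two-sided profile $e^{-\alpha|t|}$, introduces the auxiliary convolution $v(t)=\int_{-\infty}^{t}e^{-\alpha(t-\tau)}\chi_{[0,1]}(\tau)\,d\tau+\int_{t}^{\infty}e^{-\alpha(\tau-t)}\chi_{[0,1]}(\tau)\,d\tau$, bounds $v$ below by a multiple of $e^{-\alpha|t|}$ and above by an absolutely convergent series of translates of $\Lambda_1\chi_{[0,1]}$, and therefore invokes axiom iv. You instead majorize the one-sided profile $e^{-\alpha t}\chi_{[0,\infty)}(t)$ directly by the step function $\sum_{n\ge 0}e^{-\alpha n}\chi_{[n,n+1]}=\sum_{n\ge 0}e^{-\alpha n}T_{-n}\chi_{[0,1]}$, which needs only axioms i., ii.\ and v.\ together with completeness; this is more elementary and dispenses with axiom iv.\ entirely for this theorem. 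You are also more explicit than the paper about a point both arguments require, namely identifying the $E$-limit of the partial sums with the pointwise (monotone) sum; your appeal to the embedding $E\hookrightarrow L_{1,\mathrm{loc}}(\r)$ furnished by \eqref{eb} handles this correctly. The resulting constant $M(x)=K_1N^2\|\chi_{[0,1]}\|_E\|x\|/(1-e^{-\alpha})$ (note one factor of $N$ from bounding $\|\chi_{[n,n+1]}\|_E$ and one from $T_{-t_0}$) is of the same nature as the paper's. No gaps.
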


In sufficient condition, we need to add a constraint of Banach function space.
\begin{theorem}\label{t2}
Let $E$ be a Banach function space such that $\displaystyle\lim_{t\to+\infty}\|\chi_{[0,t]}\|_E=\infty$. If for each $x\in X$ function $\chi_{[t_0,\infty)}g_{t_0,x}$ belongs to $E$ and there exists a constant $M(x)>0$ such that 
$$ \|\chi_{[t_0,\infty)}g_{t_0,x}\|_E \le M(x)$$
for all $t_0\in I$ then the evolutionary family $(U(t,s))_{t\ge s}$ is exponential stability on $I$.
\end{theorem}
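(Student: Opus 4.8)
The plan is to prove Theorem~\ref{t2}, the sufficiency direction. We are given that for each $x\in X$ the function $\chi_{[t_0,\infty)}g_{t_0,x}$ lies in $E$ with a uniform bound $\|\chi_{[t_0,\infty)}g_{t_0,x}\|_E \le M(x)$, and we must deduce exponential stability. Since $E \hookrightarrow \mathbf{M}(\mathbb{R})$ by \eqref{ebm}, the hypothesis immediately yields a uniform bound on the $\mathbf{M}$-norm, hence on $\sup_{\tau}\int_\tau^{\tau+1}\|U(t,t_0)x\|\,dt$. My first step is therefore to pass from the $E$-bound to an integrable control of the trajectory: using the embedding into $\mathbf{M}(\mathbb{R})$ together with condition iii in Definition~\ref{fs}, I would extract, for each fixed $x$, a bound of the form $\int_{t_0}^{t_0+n}\|U(t,t_0)x\|\,dt \le C(x)$ that is uniform in both $t_0$ and (via the hypothesis $\lim_{t\to+\infty}\|\chi_{[0,t]}\|_E=\infty$) controls the tail.

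The second step is the standard two-stage Datko argument. First I would establish uniform boundedness of the family: using the evolution property $U(t,s)=U(t,r)U(r,s)$ and the exponential bound (iii) of the definition of evolutionary family, I would show $\sup_{t\ge s}\|U(t,s)\| < \infty$. Concretely, for $t\ge s$ write
\[
\|U(t,s)x\| = \frac{1}{\text{(length)}}\int_{\cdot}^{\cdot} \|U(t,s)x\|\,d\tau \le e^{c}\int_{t-1}^{t}\|U(\tau,s)x\|\,d\tau \le e^{c} M(x),
\]
exploiting that $\|U(t,s)x\| \le Ke^{c(t-\tau)}\|U(\tau,s)x\|$ for $\tau\le t$ to lower-bound the integrand. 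This gives $\sup\|U(t,s)x\|<\infty$ pointwise in $x$, and then by the uniform boundedness principle $\sup_{t\ge s}\|U(t,s)\| =: M_0 < \infty$.

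The third step upgrades boundedness to exponential decay. The key is to show that the norm trajectory cannot stay large. For this I would use the hypothesis $\lim_{t\to+\infty}\|\chi_{[0,t]}\|_E=\infty$ in an essential way: if $\|U(t,t_0)x\|$ failed to decay, then by the evolution semigroup property one could force $\|\chi_{[t_0,t_0+t]}g_{t_0,x}\|_E$ to grow at least like $c\,\|U(t_0+t,t_0)x\|\cdot\|\chi_{[0,t]}\|_E$ for large $t$, contradicting the uniform bound $M(x)$. Making this quantitative, I would show there exists $t_1$ (independent of $t_0$) with $\|U(t_0+t_1,t_0)\| \le 1/2$, and then iterate the evolution property across successive intervals of length $t_1$ to obtain $\|U(t,t_0)\| \le 2 M_0 e^{-\alpha(t-t_0)}$ with $\alpha = \frac{\ln 2}{t_1}$, which is exactly exponential stability.

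The main obstacle I anticipate is the third step, specifically producing a uniform-in-$t_0$ time $t_1$ at which the operator norm drops below $1/2$. The delicate point is converting the abstract $E$-norm lower bound (via $\|\chi_{[0,t]}\|_E\to\infty$) into a concrete decay estimate that is uniform over the starting point $t_0$; this requires carefully combining property i (the ideal/lattice property, so that $\|U(t_0+t,t_0)x\|\,\chi_{[t_0,t_0+t]} \le \chi_{[t_0,\infty)}g_{t_0,x}$ pointwise after using the exponential lower bound on the trajectory), the translation invariance (v), and the divergence of $\|\chi_{[0,t]}\|_E$, while keeping every constant independent of $t_0$. The uniform boundedness principle application in step two also tacitly needs the family $\{U(t,s)\}$ to be pointwise bounded, which the first step secures.
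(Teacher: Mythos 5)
Your overall strategy matches the paper's: embed $E$ into $\mathbf{M}(\mathbb{R})$, use the exponential bound of the evolutionary family to turn the integral control into uniform boundedness of $U(t,s)$, and then use $\lim_{t\to+\infty}\|\chi_{[0,t]}\|_E=\infty$ together with the lattice and translation-invariance properties to force decay. Your first two steps are sound. But there is a genuine gap in the third step. The estimate you can actually extract from the hypotheses is
\[
\|U(t_0+t,t_0)x\|\,\|\chi_{[t_0,t_0+t]}\|_E \;\le\; M_0\,\|\chi_{[t_0,\infty)}g_{t_0,x}\|_E \;\le\; M_0\,M(x),
\]
hence $\|U(t_0+t,t_0)x\| \le N M_0 M(x)/\|\chi_{[0,t]}\|_E$. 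The constant here is $M(x)$, not $C\|x\|$; this gives only strong (pointwise in $x$) convergence $U(t_0+t,t_0)x\to 0$, uniform in $t_0$, and that is not enough to produce a single $t_1$ with $\|U(t_0+t_1,t_0)\|\le 1/2$ in operator norm --- pointwise convergence of operators to zero does not imply norm convergence. The uniform boundedness principle you invoke in step two, applied to the operators $U(t,s)$, only yields $M_0=\sup\|U(t,s)\|$ and does nothing to upgrade $M(x)$ to a multiple of $\|x\|$.

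The missing idea --- and the paper's central device, which is why it includes an Appendix on the uniform boundedness principle for families of continuous seminorms --- is to apply that principle a second time, to the seminorms $\Phi_{t,t_0}(x)=\|\chi_{[t_0,t]}g_{t_0,x}\|_E$ (continuity follows from $\Phi_{t,t_0}(x)\le Ke^{c(t-t_0)}\|\chi_{[t_0,t]}\|_E\,\|x\|$, and pointwise boundedness is exactly the hypothesis $\Phi_{t,t_0}(x)\le M(x)$). This yields $\Phi_{t,t_0}(x)\le C\|x\|$ with $C$ independent of $x$, $t_0$ and $t$, after which the displayed inequality becomes $\|U(t,t_0)x\|\le NC_1C\|x\|/\|\chi_{[0,t-t_0]}\|_E$ and the divergence of $\|\chi_{[0,t]}\|_E$ gives uniform asymptotic stability, hence exponential stability. (Equivalently, you could apply the seminorm version of the principle at the very end, to $p_t(x)=\|\chi_{[0,t]}\|_E\sup_{t_0}\|U(t_0+t,t_0)x\|$.) Without one of these applications your argument stalls exactly at the point you flagged as the main obstacle.
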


The constraint $\displaystyle\lim_{t\to+\infty}\|\chi_{[0,t]}\|_E=\infty$ is really necessary and not to be missed, moreover it also appears naturally in the proof of the theorem. Easily see that the Lebesgue spaces $L^p(\r)$ with $p\in [1,\infty)$ and the Lorentz spaces $L^{p,q}(\r)$ with $p\in [1,\infty), q\in [1,\infty]$ satisfy the constraint above. So, Theorem \ref{t1} and Theorem \ref{t2} are an extension for Datko's result in the paper \cite[Theorem 1]{Dat}.

The following corollaries are a minor weakening of Theorem \ref{t2} for special evolutionary families.
\begin{corollary}\label{co1}
Let $E$ be a Banach function space such that $\displaystyle\lim_{t\to+\infty}\|\chi_{[0,t]}\|_E=\infty$ and $(T(t))_{t\ge 0}$ be a strongly continuous semigroup. If  function $\chi_{[0,\infty)}g_{0,x}$ belongs to $E$ for each $x\in X$ then  $(T(t))_{t\ge 0}$ is exponentially stable, where $g_{0,x}=\|T(t)x\|$ for $t\ge 0$.
\end{corollary}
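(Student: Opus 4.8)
The plan is to reduce Corollary \ref{co1} to Theorem \ref{t2} by exploiting the semigroup structure. First I would observe that for a strongly continuous semigroup $U(t,t_0)=T(t-t_0)$, so that
$$
g_{t_0,x}(t)=\|U(t,t_0)x\|=\|T(t-t_0)x\|=g_{0,x}(t-t_0)\qquad\text{for all }t\ge t_0.
$$
Extending by zero, this says precisely that $\chi_{[t_0,\infty)}g_{t_0,x}=T_{-t_0}\big(\chi_{[0,\infty)}g_{0,x}\big)$, where $T_{-t_0}$ is the shift operator from property v.\ of Definition \ref{fs}. One checks this directly: for $t\ge t_0$ the right-hand side equals $g_{0,x}(t-t_0)=g_{t_0,x}(t)$, while for $t<t_0$ the argument $t-t_0$ is negative and the characteristic function forces the value $0$.

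Granting this identity, the full hypothesis of Theorem \ref{t2} follows from the single assumption $\chi_{[0,\infty)}g_{0,x}\in E$. Indeed, since $E$ is $T_\tau$-invariant with $\|T_\tau\|\le N$ for all $\tau\in\r$, the translate $T_{-t_0}\big(\chi_{[0,\infty)}g_{0,x}\big)$ lies in $E$ for every $t_0\ge 0$, and hence so does $\chi_{[t_0,\infty)}g_{t_0,x}$. Moreover the uniform bound
$$
\|\chi_{[t_0,\infty)}g_{t_0,x}\|_E=\|T_{-t_0}\big(\chi_{[0,\infty)}g_{0,x}\big)\|_E\le N\,\|\chi_{[0,\infty)}g_{0,x}\|_E=:M(x)
$$
holds for all $t_0\ge 0$. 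Thus the pointwise trajectories satisfy exactly the condition required by Theorem \ref{t2}, and, since $E$ also obeys $\lim_{t\to+\infty}\|\chi_{[0,t]}\|_E=\infty$, exponential stability of $(T(t))_{t\ge 0}$ follows.

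The only point requiring care is the translation identity and the correct direction of the shift, together with the compatibility of the zero-extension with $T_{-t_0}$; beyond this bookkeeping there is no serious obstacle. It is worth emphasizing where the semigroup property is used essentially: for a general evolutionary family the trajectories $g_{t_0,x}$ need not be translates of $g_{0,x}$, so one genuinely must postulate membership and a uniform bound at every $t_0$. Here the identity $U(t,t_0)=T(t-t_0)$ is exactly what allows all of these conditions to collapse onto the single requirement at $t_0=0$, with the uniform bound $M(x)$ supplied automatically by the uniform bound $N$ on $\|T_\tau\|$.
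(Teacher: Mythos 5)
Your proof is correct and follows essentially the same route as the paper: both identify $\chi_{[t_0,\infty)}g_{t_0,x}$ with the translate $T_{-t_0}\bigl(\chi_{[0,\infty)}g_{0,x}\bigr)$, use property v.\ of Definition \ref{fs} to obtain the uniform bound $N\|\chi_{[0,\infty)}g_{0,x}\|_E$, and then invoke Theorem \ref{t2}.
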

This corollary is more generalized than Pazy's result in \cite[Chapter 4, Theorem 4.1]{Paz}. The next is periodic evolutionary family.
\begin{corollary}\label{co2}
Let $E$ be a Banach function space such that $\displaystyle\lim_{t\to+\infty}\|\chi_{[0,t]}\|_E=\infty$ and  $(U(t,s))_{t\ge s}$ be a periodic evolutionary family with period $T>0$. If  function $\chi_{[T,\infty)}g_{T,x}$ belongs to $E$ for each $x\in X$  then the evolutionary family $(U(t,s))_{t\ge s}$ is exponential stability on $I$.
\end{corollary}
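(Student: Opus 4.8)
The plan is to deduce Corollary \ref{co2} from Theorem \ref{t2} by verifying the latter's hypothesis; thus I must upgrade the single membership assumption at $t_0=T$ into the full statement that $\chi_{[t_0,\infty)}g_{t_0,x}\in E$ together with a bound $\|\chi_{[t_0,\infty)}g_{t_0,x}\|_E\le M(x)$ holding \emph{uniformly} in $t_0\in I$. There are two gaps to close. First, the hypothesis asserts only membership, not a norm estimate, and the estimate I extract must be linear in the data so that it survives replacing $x$ by a nearby (varying) vector. Second, the information is placed only at $t_0=T$, so periodicity must be exploited to transport it to every $t_0$.

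\textbf{Step A (a uniform linear bound via the closed graph theorem).} First I would produce a constant $C>0$ with $\|\chi_{[T,\infty)}g_{T,y}\|_E\le C\|y\|$ for all $y\in X$. To linearise the norm map, introduce the vector-valued space
$$\mathcal E=\Big\{f:[T,\infty)\to X \text{ strongly measurable} :\ \chi_{[T,\infty)}\|f(\cdot)\|\in E\Big\},\qquad \|f\|_{\mathcal E}=\big\|\chi_{[T,\infty)}\|f(\cdot)\|\big\|_E,$$
each $\|f(\cdot)\|$ being extended by zero to $\r$ as in the convention for $\chi_{D(\p)}\p$. Using completeness of $E$ with the solidity axiom i.\ and the embedding $E\hookrightarrow L_{1,\mathrm{loc}}(\r)$, one verifies directly that $\mathcal E$ is a Banach space: an absolutely convergent telescoping series $\sum\|f_{k+1}-f_k\|$ is Cauchy, hence convergent, in $E$, and extracting an a.e.-convergent subsequence from $L_{1,\mathrm{loc}}(\r)$ identifies the limit. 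The linear map $V\colon X\to\mathcal E$, $Vy=\big(t\mapsto U(t,T)y\big)$, is everywhere defined by hypothesis, and its graph is closed: if $y_k\to y$ in $X$ and $Vy_k\to g$ in $\mathcal E$, then $U(t,T)y_k\to g(t)$ a.e.\ along a subsequence while $U(t,T)y_k\to U(t,T)y$ pointwise, forcing $g=Vy$. The closed graph theorem then gives $\|V\|<\infty$, which is the desired estimate with $C=\|V\|$.

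\textbf{Step B (transporting the bound by periodicity).} Fix $x\in X$ and $t_0\in I$, and write $t_0=r+nT$ with an integer $n$ and $r\in[0,T)$. Iterating the $T$-periodicity $U(t+T,s+T)=U(t,s)$ exactly $n$ times yields $\chi_{[t_0,\infty)}g_{t_0,x}=T_{-nT}\big(\chi_{[r,\infty)}g_{r,x}\big)$, so axiom v.\ ($T_\tau$-invariance with bound $N$) gives $\|\chi_{[t_0,\infty)}g_{t_0,x}\|_E\le N\,\|\chi_{[r,\infty)}g_{r,x}\|_E$, and it remains to bound the latter uniformly for $r\in[0,T)$. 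Splitting $[r,\infty)=[r,T)\cup[T,\infty)$, on the tail the cocycle identity $U(t,r)=U(t,T)U(T,r)$ gives $\chi_{[T,\infty)}g_{r,x}=\chi_{[T,\infty)}g_{T,y}$ with $y=U(T,r)x$, so Step A and the exponential bound of Definition 1.1(iii) yield $\|\chi_{[T,\infty)}g_{r,x}\|_E\le C\|y\|\le CKe^{cT}\|x\|$. On the compact piece $[r,T)$ the estimate $g_{r,x}\le Ke^{cT}\|x\|$ combined with solidity and $\chi_{[r,T)}\le\chi_{[0,T]}\in E$ (axiom ii.) gives $\|\chi_{[r,T)}g_{r,x}\|_E\le Ke^{cT}\|x\|\,\|\chi_{[0,T]}\|_E$. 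Adding the two pieces and multiplying by $N$ produces a constant $M(x)$ independent of $t_0$, so Theorem \ref{t2} applies and $(U(t,s))_{t\ge s}$ is exponentially stable.

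The technical heart is Step A. The hypothesis supplies only membership, and converting it into a bound linear in $\|y\|$ (and hence uniform over the bounded but $r$-dependent family $\{U(T,r)x\}$) forces a closed graph argument, whose setup requires establishing completeness of the auxiliary K\"othe--Bochner space $\mathcal E$ directly from axioms i.--v.\ rather than from an assumed Fatou property. By contrast Step B is comparatively routine; its only subtleties are the bookkeeping of the windowing $t_0=r+nT$ on the half-line, where one may shift downward only as far as $r\ge 0$, and the uniform control of the harmless compact contribution on $[r,T)$.
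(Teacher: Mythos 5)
Your proof is correct, and its overall architecture matches the paper's: obtain a uniform linear bound for the trajectory started at $t_0=T$ by a Baire-category argument, then transport it to arbitrary $t_0$ via periodicity after splitting off the compact window $[r,T)$. The genuine difference is the mechanism in your Step A. The paper never touches the full norm $\|\chi_{[T,\infty)}g_{T,y}\|_E$: it applies its uniform boundedness principle for continuous seminorms (Appendix) to the truncated seminorms $\Phi_{t,T}(y)=\|\chi_{[T,t]}g_{T,y}\|_E$, $t>T$, each continuous by the exponential bound (iii) and pointwise bounded by $\|\chi_{[T,\infty)}g_{T,y}\|_E<\infty$, yielding $\Phi_{t,T}(y)\le D\|y\|$; this truncated estimate is all that the proof of Theorem \ref{t2} actually consumes, so the paper re-enters that proof at \eqref{eq3} rather than invoking the theorem as a black box. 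You instead verify the hypothesis of Theorem \ref{t2} literally, which forces you to control the non-truncated norm, and you do so via the closed graph theorem on the K\"othe--Bochner space $\mathcal{E}$ --- at the price of proving completeness of $\mathcal{E}$ from axiom i.\ and the embedding $E\hookrightarrow L_{1,\mathrm{loc}}(\mathbb{R})$, extra work the paper avoids, but which buys the slightly stronger conclusion $\|\chi_{[t_0,\infty)}g_{t_0,x}\|_E\le M\|x\|$ with $M$ independent of $t_0$. Your Step B is essentially the paper's decomposition (crude bound $Ke^{cT}\|x\|\,\|\chi_{[0,T]}\|_E$ on the initial window, reduction of the tail to $t_0=T$ via $y=U(T,r)x$, reduction of $t_0\ge T$ to $r\in[0,T)$ by shift invariance v.), and both arguments are sound.
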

Note that  evolutionary family $(U(t,s))_{t\ge s}$ is periodic with period $T>0$ if $U(t+T,s+T)=U(t,s)$ for all $t\ge s$ and    $t,s \in I$.

\section{Proof of main results}\label{s3}
\begin{proof}[Proof of Theorem \ref{t1}]
The first we show $e^{-\alpha |t|}\in E$. Put
$$ v(t)=\int_{-\infty}^{t} e^{-\alpha(t-\tau)} \chi_{[0,1]}(\tau) d\tau + \int_{t}^{\infty} e^{-\alpha(\tau-t)} \chi_{[0,1]}(\tau) d\tau.$$
Then,
$$ 
v(t)=\left \{ 
\begin{array}{llc}
\frac{e^{-\alpha t}(e^{\alpha} -1)}{\alpha}& \text{if }\; t\ge 1,&\\ 
\frac{e^{\alpha t}(1-e^{-\alpha} )}{\alpha}& \text{if }\; t\le 0,&\\
\frac{1-e^{-\alpha t} }{\alpha} + \frac{1-e^{-\alpha (1-t)} }{\alpha} & \text{if }\; t\in (0,1).&
\end{array}\right.
 $$
Therefore,
$e^{\alpha |t|} v(t) \ge \frac{1-e^{-\alpha} }{\alpha}$ for all $t\in \r$. On the other hand,
\begin{align*}
v(t)& = \sum_{k=0}^\infty \int_{t-(k+1)}^{t-k} e^{-\alpha(t-\tau)}\chi_{[0,1]}(\tau)d\tau +\sum_{k=0}^\infty \int_{t+k}^{t+k+1} e^{-\alpha(\tau-t)}\chi_{[0,1]}(\tau)d\tau\\
& \le \sum_{k=0}^\infty  e^{-\alpha k} \int_{t-(k+1)}^{t-k} \chi_{[0,1]}(\tau)d\tau +\sum_{k=0}^\infty e^{-\alpha k} \int_{t+k}^{t+k+1} \chi_{[0,1]}(\tau)d\tau\\
&= \sum_{k=0}^\infty  e^{-\alpha k} (T_{-k-1}(\Lambda_1\chi_{[0,1]}))(t) + \sum_{k=0}^\infty  e^{-\alpha k} (T_{k}(\Lambda_1\chi_{[0,1]}))(t)=:\p(t).
\end{align*}
This implies that
$$ e^{-\alpha|t|} \le \frac{\alpha}{1-e^{-\alpha}} v(t) \le \frac{\alpha}{1-e^{-\alpha}} \p(t) \quad \text{for all } \; t\in \r.$$
We also have the following estimates.
\begin{align*}
\sum_{k=0}^\infty  e^{-\alpha k} \| T_{-k-1} (\Lambda_1 \chi_{[0,1]}) \|_E + \sum_{k=0}^\infty  e^{-\alpha k} \| T_{k}(\Lambda_1 \chi_{[0,1]}) \|_E &\le 
\sum_{k=0}^\infty  e^{-\alpha k}\, 2N \| \Lambda_1 \chi_{[0,1]} \|_E  \\
&= \frac{2N}{1-e^{-\alpha}} \| \Lambda_1 \chi_{[0,1]} \|_E.  
\end{align*}
So, function series $\p$ is absolutely convergent in the Banach function space $E$. Therefore, $\p \in E$ and $\|\p\|_E \le \frac{2N \| \Lambda_1 \chi_{[0,1]} \|_E}{1-e^{-\alpha}}$.
By the item i. in Definition \ref{fs}, we get $e^{-\alpha|t|} \in E$ and
$$ \| e^{-\alpha|\cdot|} \|_E  \le \frac{2N \alpha\| \Lambda_1 \chi_{[0,1]} \|_E}{(1-e^{-\alpha})^2}.$$

Because $(U(t,s))_{t\ge s}$ is exponentially stable on $I$ so 
\begin{equation}\label{eq1}
 (\chi_{[t_0,\infty)}g_{t_0,x})(t) \le K_1 \chi_{[t_0,\infty)}(t) e^{-\alpha(t-t_0)}\|x\| \le K_1\|x\| (T_{-t_0}e^{-\alpha|\cdot|})(t)
\end{equation}
for all $t\in \r$. By the items i., v. in Definition \ref{fs} and \eqref{eq1}, we obtain $\chi_{[t_0,\infty)}g_{t_0,x} \in E$ and
$$ \|\chi_{[t_0,\infty)}g_{t_0,x}\|_E \le \frac{2N^2 K_1 \alpha\| \Lambda_1 \chi_{[0,1]} \|_E}{(1-e^{-\alpha})^2} \|x\| $$
for all $t_0\in I$.
\end{proof}

\begin{proof}[Proof of Theorem \ref{t2}]
For $t_0\in I$ and $t>t_0$, for each $x\in X$ then mapping $\p_x$ is defined as follows
$$ 
\p_x(\xi)=\begin{cases}
\|U(\xi,t_0)x\|& \text{ if }\; \xi \in [t_0,t],\\
0&\text{ if }\; \xi \notin [t_0,t].
\end{cases}
 $$
Then, $\p_x \in E$ and 
\begin{equation}\label{eq2}
\|\p_x\|_E \le \|\chi_{[t_0,\infty)}g_{t_0,x}\|_E \le M(x).
\end{equation}
We now construct a family of functions $\{\Phi_{t,t_0}\}$ determining by 
$$ \Phi_{t,t_0}: X \to \r \quad \text{with} \quad \Phi_{t,t_0}(x)=\|\p_x\|_E.$$
Easily see that $\Phi_{t,t_0}$ is a seminorm on $X$. On the other hand, we have
$$ \p_x(\xi) \le Ke^{c(t-t_0)}\chi_{[t_0,t]}(\xi) \|x\| $$
for all $\xi \in \r$. Thus, $\Phi_{t,t_0}(x) \le Ke^{c(t-t_0)} \|\chi_{[t_0,t]}\|_E \|x\|$. So, $\Phi_{t,t_0}$ is a continuous seminorm on $X$. Moreover, by \eqref{eq2} then
the family of continuous seminorms $\{ \Phi_{t,t_0}: t_0\in I, t>t_0 \}$ is pointwise bounded. Applying uniform boundedness principle (see Appendix), there exists a constant 
$C>0$ such that 
\begin{equation}\label{eq3}
 \Phi_{t,t_0}(x) \le C\|x\| 
\end{equation}
for all $x\in X$ and $t_0\in I, t>t_0$.

For $\xi \in [t_0,t]$, we have
$$ e^{-c(t-\xi)} \|U(t,t_0)x\| =   e^{-c(t-\xi)} \|U(t,\xi)U(\xi,t_0)x\| \le K\|U(\xi,t_0)x\|.$$
Therefore, 
$$ \chi_{[t_0,t]}(\xi)  e^{-c(t-\xi)} \|U(t,t_0)x\| \le K\p_x(\xi)$$
for all $\xi \in \r$. So, 
\begin{equation*}
\|\chi_{[t_0,t]}e^{-c(t-\cdot)}\|_E  \|U(t,t_0)x\| \le KC\|x\|.
\end{equation*}
By \eqref{ebm}, 
$$ \|\chi_{[t_0,t]}e^{-c(t-\cdot)}\|_M \|U(t,t_0)x\| \le \frac{KC M}{\inf_{\tau\in \r} \|\chi_{[\tau,\tau+1]}\|_E} \|x\| $$
for all $t_0 \in I$ and $t>t_0$.

For $t\ge t_0 +1$, we have
$$ \|\chi_{[t_0,t]}e^{-c(t-\cdot)}\|_M \ge \int_{t-1}^{t} e^{-c(t-\xi)} d\xi =\frac{1-e^{-c}}{c}.$$
Thus,
$$ \|U(t,t_0)x\| \le \frac{KC M c}{(1-e^{-c})\inf_{\tau\in \r} \|\chi_{[\tau,\tau+1]}\|_E} \|x\|  $$
for all $t_0\in I$ and $t\ge t_0+1$. Because the evolutionary family $(U(t,s))_{t\ge s}$ is exponentially bounded so there exists a constant $C_1>0$ such that
$$ \|U(t,t_0)x\| \le C_1 \|x\| $$
for all $x\in X$ and $t_0 \in I, t\ge t_0$.

For $\xi \in [t_0,t]$, we have $\|U(t,t_0)x\| \le C_1 \|U(\xi, t_0)x\|$. Therefore, $\chi_{[t_0,t]}(\xi) \|U(t,t_0)x\| \le C_1 \p_x(\xi)$ for all $\xi \in \r$. So,
$$ \|\chi_{[t_0,t]}\|_E  \|U(t,t_0)x\| \le C_1 C \|x\|$$
for all $t_0\in I$ and $t>t_0$. On the other hand,
$$ \chi_{[0,t-t_0]}(\xi) = \chi_{[t_0,t]}(\xi+t_0) =(T_{t_0}\chi_{[t_0,t]})(\xi).$$
Therefore, $\|\chi_{[0,t-t_0]}\|_E \le N  \|\chi_{[t_0,t]}\|_E$. So, 
$$ \|U(t,t_0)x\| \le \frac{NC_1C}{\|\chi_{[0,t-t_0]}\|_E} \|x\| $$
for all $t_0\in I $ and $ t>t_0$. 
Because of $\displaystyle\lim_{t\to+\infty}\|\chi_{[0,t]}\|_E=\infty$ so the evolutionary family $(U(t,s))_{t\ge s}$ is exponential stability on $I$.
\end{proof}

\begin{proof}[Proof of Corollary \ref{co1}]
For $t_0\in \r$ and $x\in X$, we have $g_{t_0,x}(t)=\|T(t-t_0)x\|$ for $t\ge t_0$. Therefore,
$$ (\chi_{[t_0,\infty)}g_{t_0,x})(t) = (\chi_{[0,\infty)}g_{0,x})(t-t_0)=(T_{-t_0}(\chi_{[0,\infty)}g_{0,x}))(t)$$
for $t\in \r$. By the item v. in Definition \ref{fs}, we get $\chi_{[t_0,\infty)}g_{t_0,x} \in E$ and 
$$ \|\chi_{[t_0,\infty)}g_{t_0,x}\|_E \le N \|\chi_{[0,\infty)}g_{0,x}\|_E \quad \text{for all } \; t_0 \in \r. $$
By Theorem \ref{t2}, $(T(t))_{t\ge 0}$ is an exponentially stable semigroup.
\end{proof}

\begin{proof}[Proof of Corollary \ref{co2}]
By the periodicity of the evolutionary family $(U(t,s))_{t\ge s}$ on $I$ so we just need  to consider $t_0\ge 0$.
We will repeat  manner of the proof in Theorem \ref{t2} to obtain \eqref {eq3} as follows.

The similar discussions, there exists a constant $D>0$ such that
$$ \Phi_{t, T}(x)\le D\|x\| $$
for all $x\in X$ and $t>T$. For $t_0 \in [0,T)$ and $t\in (t_0,T]$, we have $\p_x(\xi) \le \chi_{[0,T]}(\xi) e^{cT}\|x\|$ for all $\xi \in \r$. Therefore,
$\p_x \in E$ and $\|\p_x\|_E \le e^{cT} \|\chi_{[0,T]}\|_E \|x\|$. For $t>T$,
$$ \p_x(\xi) \le \chi_{[0,T]}(\xi) e^{cT} \|x\|+\chi_{[T,t]} (\xi) \|U(\xi,T)U(T,t_0)x\| $$
for all $\xi \in \r$. Thus,  $\p_x \in E$ and
$$ \|\p_x\|_E \le e^{cT} \|\chi_{[0,T]}\|_E  \|x\|+   \Phi_{t, T}(U(T,t_0)x) \le e^{cT}(\|\chi_{[0,T]}\|_E+D)\|x\|. $$

For $t_0\ge T$ and $t>t_0$, we can write $t_0=nT+\tau$ with $n\in \mathbb{N}$ and $\tau\in [0,T)$. Then,
$$ \|U(\xi,t_0)x\|=\|U(\xi, nT+\tau)x\|=\|U(\xi-nT,\tau)x\|=\|U(\xi-t_0+\tau,\tau)x\|$$
for $\xi \in [t_0,t]$. Put
$$ \psi_{x}(\xi)= 
\begin{cases}
\|U(\xi,\tau)x\|& \text{ if }\; \xi \in [\tau,t-t_0+\tau],\\
0&\text{ if }\; \xi \notin [\tau,t-t_0+\tau].
\end{cases}$$
Then, $\p_{x}(\xi)=(T_{-t_0+\tau}\psi_x)(\xi)$ for all $\xi \in \r$. Because of $\tau \in [0,T)$ so $\psi_x\in E$, hence  $\p_{x} \in E$  and 
$$ \|\p_x\|_E \le  N e^{cT}(\|\chi_{[0,T]}\|_E+D)\|x\|.$$
So, for all $t_0\ge 0$ and $t>t_0$ then $\p_x\in E$ and there is a constant $M(x)>0$ such that $\|\p_x\|_E \le M(x)$. Therefore, there exists a constant $C>0$ such that 
\begin{equation*}
 \Phi_{t,t_0}(x) \le C\|x\| 
\end{equation*}
for all $x\in X$ and $t_0\ge 0, t>t_0$. The next step, by the same discussions as in the proof of Theorem \ref{t2} we deduce that the evolutionary family $(U(t,s))_{t\ge s}$ is exponential stability on $I$.
\end{proof}

\subsection*{Appendix}
Let $X$ be a Banach space over the field $K$ ($K=\r$ or $\mathbb{C}$). A mapping $p: X\to \r_+$ is a seminorm on $X$ if $p(\theta x)=|\theta| p(x)$ and $p(x+y)\le p(x)+p(y)$ for all $x, y\in X$ and $\theta \in K$. As known that seminorm $p$ is continuous on $X$ if and only if $p$ is continuous at $0$. Moreover, if $A$ is a bounded linear operator on $X$ then $p_A(x)=\|Ax\|$ for $x\in X$ is a continuous seminorm on $X$.

Let $\Lambda$ be an index set. A family of continuous seminorms $\{ p_{\lambda} : \lambda \in \Lambda \}$ on $X$ is called
\begin{itemize}
 \item pointwise bounded if for each $x\in X$ there exists a constant $M(x)>0$ such that $p_{\lambda}(x) \le M(x)$ for all $\lambda \in \Lambda$;
\item uniformly bounded if there is a constant $M>0$ such that $p_{\lambda} (x) \le M \|x\|$ for all $\lambda \in \Lambda$ and $x\in X$.
\end{itemize}

The similar proof as the uniform boundedness principle for family of bounded linear operators (see Kreyszig \cite{Krey}),  we get version of 
uniform boundedness principle for family of  continuous seminorms.

\medskip
\noindent
{\bf Uniform boundedness principle.} {\it Let $\{ p_{\lambda} : \lambda \in \Lambda \}$ be a  family of continuous seminorms on $X$. If this family is pointwise bounded then it is also uniformly bounded.}

\medskip
Obviously, this version is more general than the old version in functional analysis.

\end{document}